\author{H. Egger}
\address{Department of Mathematics, TU Darmstadt, Germany}
\email{egger@mathematik.tu-darmstadt.de}
\title[Approximation of dissipative evolution problems]{Structure preserving approximation\\[0.5ex]of dissipative evolution problems}
\newtheorem{theorem}{Theorem}
\theoremstyle{definition}
\def\A{\mathcal{A}}
\def\D{\mathcal{D}}
\def\E{\mathcal{E}}
\def\Q{\mathcal{Q}}
\def\Ttau{T_\tau}
\def\HH{\mathbb{H}}
\def\RR{\mathbb{R}}
\def\VV{\mathbb{V}}
\def\WW{\mathbb{W}}
\def\dt{\partial_t}
\def\dx{\partial_x}
\def\dn{\partial_n}
\def\ddt{\frac{d}{dt}}
\def\dtau{\partial_\tau}
\def\div{\mathrm{div}}
\def\curl{\mathrm{curl}}
\def\eps{\epsilon}
\def\dom{\mathrm{dom}}
\numberwithin{equation}{section}
\begin{document}


\begin{abstract}
We present a general abstract framework for the systematic numerical approximation of dissipative evolution problems. The approach is based on rewriting the evolution problem in a particular form that complies with an underlying energy or entropy structure. Based on the variational characterization of smooth solutions, we are then able to show that the approximation by Galerkin methods in space and discontinuous Galerkin methods in time automatically leads to numerical schemes that inherit the dissipative behavior of the evolution problem. The proposed framework is rather general and can be applied to a wide range of applications. This is demonstrated by a detailed discussion of a variety examples ranging from diffusive partial differential equations to Hamiltonian and gradient systems. 
\end{abstract}

\maketitle

\vspace*{-1em}

\begin{quote}
\noindent 
{\small {\bf Keywords:} 
dissipative evolution problems,
nonlinear partial differential equations,
entropy methods,
Galerkin approximation
}
\end{quote}

\begin{quote}
\noindent
{\small {\bf AMS-classification (2000):}
37K05, 
37L65, 
47J35, 
65J08  
}
\end{quote}

\section{Introduction} \label{sec:1}

The scope of this paper is to devise a general framework for the systematic construction of numerical approximation schemes for dissipative evolution problems that are accompanied by an energy or entropy structure. 
Such problems have been studied intensively in the literature over the last years, in particular, in connection with the analysis and numerical approximation of nonlinear partial differential equations. Let us refer to \cite{Evans08,Roubicek13}  and \cite{Evans04,Juengel16,Matthes07} for an introduction to the field and further references.

\subsection*{Outline of the approach.}
Our starting point and basic assumption is that the evolution problem under consideration can be stated in the abstract form 
\begin{align} \label{eq:11}
\Q(u)^* \dt u = \A(u), \qquad t>0,
\end{align}
that complies with an associated energy functional $\E(\cdot)$ in the sense that $\Q(u)^*$ is the adjoint of the operator $\Q(u)$ which is related to the derivative of this functional by
\begin{align} \label{eq:12}
\E'(u) = Q(u) u. 
\end{align}
Based on this simple structural assumption, one can verify that any smooth solution of the evolution problem \eqref{eq:11} satisfies a \emph{dissipation identity} of the form
\begin{align} \label{eq:13}
\ddt \E(u(t)) = \langle \A(u),u\rangle =: -\D(u(t)).
\end{align}
Hence, the functional $\D(\cdot)$ describes the rate at which energy is dissipated. 
The dynamical system under consideration is called \emph{dissipative}, if $\D(u) \ge 0$. Note that conservative systems are included as a special case with $\D(u)=0$.
As already mentioned above, such dissipation identities are of great importance for the analysis and numerical approximation of such systems by energy or entropy methods; see e.g. \cite{Evans08,Roubicek13} and \cite{Evans04,Juengel16,Matthes07}.

\subsection*{Overview about results.}
It is clear that an evolution problem can be written in many, at least formally, equivalent ways. 
The particular form stated above, however, has the following important advantages for the numerical approximation. 
\begin{itemize}\itemsep1ex
 \item The dissipation identity \eqref{eq:13} here follows by simply testing the variational form 
       \begin{align} \label{eq:14}
        \langle Q(u(t))^* \dt u(t), v\rangle &= \langle \A(u(t)),v\rangle, \qquad \forall v \in \VV, \ t>0,
       \end{align}
       of the evolution equation \eqref{eq:11} with the test function $v=u(t)$ and using the structural relation \eqref{eq:12} between $\Q(u)$ and $\E'(u)$. 
       Let us note that our arguments are therefore naturally associated with a weak solution concept.
 \item For approximations $u_h(t)$ obtained by Galerkin projection of this variational principle to a subspace $\VV_h \subset \VV$, a corresponding discrete dissipation identity 
       \begin{align} \label{eq:15}
        \ddt \E(u_h(t)) =  -\D(u_h(t)), \qquad  t>0,
       \end{align}
       is valid automatically, which can be proven with the same arguments as on the continuous level. 
       The geometric structure of the problem is thus inherited. 
 \item The approximations $(u^n)_{n \ge 0}$ obtained by the implicit Euler method applied to the variational principle above satisfy a discrete dissipation inequality
       \begin{align} \label{eq:16}
       \dtau \E(u^n) \le -\D(u^n), \qquad n > 0, 
       \end{align}
       where $\dtau E(u^n)$ denotes the backward difference quotient in time. Again, the underlying dissipation structure is inherited automatically. Note that due to numerical dissipation, an inequality is obtained here instead of an equality. We will further show that discretization in time by discontinuous Galerkin methods allows to obtain similar results also for approximations of higher order. 
\end{itemize}

\subsection*{Summary.} 
A structure preserving numerical approximation of dissipative evolution problems can be achieved 
in a systematic manner, if the problem is stated in the appropriate form already on the continuous level.
We will illustrate by several examples that this is possible for a wide range of applications.

\subsection*{Previous results.}
Before we proceed, let us briefly discuss some related literature:
Energy conservation or dissipation or entropy production and the preservation of these properties during numerical approximation of evolution problems is of great interest already from an analytical point of view, e.g., to obtain uniform a-priori bounds for numerical approximations used for establishing existence of solutions to nonlinear evolution problems; see  \cite{Evans08,Roubicek13}  and \cite{Evans04,Juengel16,Matthes07} for examples and further references.

The use of energy estimates for the numerical analysis of Galerkin approximations is well-developed for simple evolution problems; see e.g. \cite{Thomee06} and the references given there.
In contrast to that, the design and analysis of structure preserving or dissipative discretization schemes for nonlinear evolution problems still seems at an early stage of research, and only partial results are available for specific problems; see \cite[Ch.~5]{Juengel16} for a recent review and further references.

Let us briefly mention some particular results:
One-leg multistep methods and implicit Runge-Kutta methods have been investigated for the time discretization of dissipative evolution problems in \cite{JuengelMilisic15,JuengelSchuchnigg17}.  
Apart from the implicit Euler method, however, the assumptions required for the rigorous analysis of these schemes seem rather restrictive. 
Dissipative finite volume methods for the Fokker-Planck equation have been analyzed in \cite{Mielke13} and mixed finite element approximations for nonlinear diffusion problems were investigated in \cite{BurgerCarilloWolfram10}. 
Further examples for entropy based finite element approximations of particular applications are \cite{BarrettBlowey98}, which is concerned with degenerate Allen-Cahn and Cahn-Hillard models, and \cite{ProhlSchmuck09}, dealing with a Nernst-Planck-Poisson system. 
In a similar spirit, a discontinuous Hamiltonian finite element method for the approximation of linear hyperbolic systems was proposed in \cite{XuVanDerVegtBokhove08}.

The philosophy of the current manuscript seems to differ substantially from these previous approaches: 
instead of developing special approximation schemes for individual problems, we here attempt to provide a unified framework that can be applied to a wide range of applications and which leads to dissipative discretization methods automatically. 
We strongly believe that this approach may be useful for many applications and it may serve as a starting point for the analysis, the proof of convergence and convergence rates, et cetera. These are left as topics for future research.

\subsection*{Outline.}
The remainder of the manuscript is organized as follows:
In Section~\ref{sec:2}, we present in more detail the general framework and the basic assumptions characterizing the dissipative structure of the underlying evolution problem. 
In Section~\ref{sec:3}, we discuss the systematic approximation by Galerkin projection in state space and prove the discrete dissipation inequality for the discontinuous Galerkin discretization in time. 
Sections~\ref{sec:4}-\ref{sec:10} are devoted to the discussion of several examples taken from literature. As we will see, our approach is applicable to all test problems and provides a recipe for the systematic construction of numerical approximation schemes. 
Some aspects that would deserve further investigation will be highlighted at the end of the manuscript.

\section*{Part 1: The general framework}

In the following two sections, we first introduce our basic assumptions and the problems to be considered and then discuss their systematic numerical approximation. 

\section{Problem setting} \label{sec:2}

Let us start with discussing the general abstract form of evolution problems that are compatible with a governing dissipation structure.
The presentation will be somewhat formal, i.e., we do not try to be as general or rigorous as possible, but instead, we choose a functional analytic setting that is simple enough to allow for a convenient presentation and at the same time general enough to capture the main aspects. 

\subsection{Function spaces}

Let $\HH$ be a real Hilbert space with scalar product $\langle \cdot, \cdot\rangle$. 
We identify $\HH$ with its dual space $\HH^*$ and the scalar product on $\HH$ with the duality product on $\HH^* \times \HH$. 
Let $\VV,\WW \subset \HH$ be two reflexive Banach spaces which are continuously and densly embedded in $\HH$.
Note that, by embedding and the identification of $\HH$ with $\HH^*$, we can interpret $\HH$ as a dense subspace of the dual spaces $\VV^*$ and $\WW^*$ and thus obtain two evolution triples $\VV \subset \HH \subset \VV^*$ and $\WW \subset \HH \subset \WW^*$. Since both triples are based on the same pivot space $\HH$, we also obtain the natural inclusions $\VV \subset \WW^*$ and $\WW \subset \VV^*$.
We refer to \cite{Roubicek13} for details on the notation and further information.

\subsection{Energy functional}

Let $\E : \VV \subset \WW^*  \to \RR$ be a given energy functional that is 
assumed to be convex, proper, and differentiable on its domain with respect to the topology of $\WW^*$.
Then by reflexivity of the space $\WW$, the derivative $\E'(u) \in \WW^{**}$ can be understood as an element of $\WW$. 
The main structural assumption for our approach is that the derivative of the energy functional can further be represented in the form
\begin{align} \label{eq:21}
\E'(u) = \Q(u) u, \qquad \text{for all } u \in \dom(\E) \subset \VV,
\end{align}
with bounded linear operators $\Q(u) : \VV \to \WW$. 
Here $\dom(\E)$ is the essential domain of the functional $\E$, 
i.e., the set of all $u$ such that $\E(u)$ is finite.
By the identities
\begin{align} \label{eq:22}
\langle \Q(u)^* w^*, v\rangle_{\VV^* \times \VV} = \langle w^*, \Q(u) v\rangle_{\WW^* \times \WW}, \qquad \forall v \in \VV, \ w^* \in \WW^*, 
\end{align}
we define corresponding adjoint operators $\Q(u)^* : \WW^* \to \VV^*$, again linear and bounded.

\subsection{Evolution problem}

In order to comply with the underlying energy dissipation structure, we require that the evolution problem is 
given in the abstract form 
\begin{align}  \label{eq:23}
\Q^*(u) \dt u  = \A(u), \qquad \text{for all } t > 0,
\end{align}
where $\A : \VV \to \VV^*$ is some suitable densly defined operator. We denote by 
\begin{align} \label{eq:24}
-\D(u) := \langle \A(u), u\rangle_{\VV^* \times \VV}, \qquad \forall u \in \dom(\A),
\end{align}
the associated dissipation functional $\D : \VV \to \RR$ which describes the rate at which energy is dissipated.
In most cases of practical interest, $\D(u)$ will be non-negative.

\subsection{Structure theorem}

Under the above assumptions, any smooth solution of the abstract evolution problem \eqref{eq:23} satisfies the following dissipation identity.

\begin{theorem} \label{thm:1}
Let $u : [0,T] \to \VV \subset \WW^*$ be a smooth solution of \eqref{eq:23}, i.e., $u$ is continuous in $t$ with respect to the norm of $\VV$ and continuously differentiable with respect to the norm of $\WW^*$; moreover, $u(t) \in \dom(\A)$ for all $t$ and $u(0)\in\dom(\E)$. 
Then 
\begin{align} \label{eq:25}
\frac{d}{dt} \E(u(t)) = -\D(u(t)) \qquad  \text{for all } t>0. 
\end{align}
\end{theorem}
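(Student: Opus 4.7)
The plan is to chain together the three ingredients supplied by the setting: the chain rule for $\E$, the structural identity \eqref{eq:21}, the adjoint relation \eqref{eq:22}, and finally the evolution equation \eqref{eq:23} together with the definition \eqref{eq:24} of $\D$. The proof is essentially a one‑line calculation once the pairings are tracked carefully; the real content is checking that the smoothness hypotheses justify each step.

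First I would invoke the chain rule. The functional $\E$ is assumed differentiable on its domain with respect to the $\WW^*$ topology, and by hypothesis $t \mapsto u(t)$ is continuously $\WW^*$-differentiable with $u(t) \in \dom(\E)$ for all $t$ (continuity of $u$ in $\VV$, together with $u(0) \in \dom(\E)$, gives this along the trajectory, at least formally, which matches the ``smooth solution'' convention of the theorem). This yields
\begin{align*}
\frac{d}{dt} \E(u(t)) = \langle \E'(u(t)), \dt u(t) \rangle_{\WW \times \WW^*}.
\end{align*}
Next I would substitute the structural identity \eqref{eq:21}, writing $\E'(u(t)) = \Q(u(t)) u(t) \in \WW$; this is well defined because $u(t) \in \dom(\E) \subset \VV$ and $\Q(u(t)) : \VV \to \WW$ is bounded.

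Then I would transpose the pairing using the definition \eqref{eq:22} of the adjoint. With $w^* = \dt u(t) \in \WW^*$ (here one uses $\HH \hookrightarrow \WW^*$ and the continuous $\WW^*$-differentiability of $u$) and $v = u(t) \in \VV$, the relation
\begin{align*}
\langle \Q(u(t)) u(t), \dt u(t) \rangle_{\WW \times \WW^*} = \langle \Q(u(t))^* \dt u(t), u(t) \rangle_{\VV^* \times \VV}
\end{align*}
is just the defining identity of $\Q(u)^*$. Finally I would insert the evolution equation \eqref{eq:23} in the form $\Q(u(t))^* \dt u(t) = \A(u(t))$ and apply \eqref{eq:24} to get $\langle \A(u(t)), u(t) \rangle_{\VV^* \times \VV} = -\D(u(t))$, concluding \eqref{eq:25}.

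The main obstacle is the first step, namely justifying the chain rule in this mixed functional‑analytic setting: $\E$ is defined on the smaller space $\VV$, but its derivative is read off in the $\WW^*$ topology, and $\dt u$ only lives in $\WW^*$. The smoothness assumption in the theorem is designed to tailor to exactly this; one has to note that $\E'(u) \in \WW^{**} \cong \WW$ by reflexivity of $\WW$, so that the pairing $\langle \E'(u), \dt u\rangle_{\WW\times\WW^*}$ is meaningful, and that the difference quotient $(\E(u(t+h))-\E(u(t)))/h$ converges to $\langle \E'(u(t)),\dt u(t)\rangle$ by composing the Fréchet differentiability of $\E$ on $\dom(\E)$ (in the $\WW^*$ topology) with the $\WW^*$‑differentiability of $u$. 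The remaining steps are purely algebraic identities and do not involve additional analytic subtleties.
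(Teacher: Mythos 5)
Your proof is correct and follows exactly the same route as the paper: chain rule, substitution of the structural identity \eqref{eq:21}, transposition via the adjoint relation \eqref{eq:22}, and insertion of the evolution equation \eqref{eq:23} together with the definition \eqref{eq:24}. Your extra care in justifying the chain rule in the mixed $\VV$/$\WW^*$ setting is a welcome refinement of the paper's brief remark that all terms are well defined under the stated regularity assumptions.
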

\begin{proof}
Formal differentiation of $\E(u(t))$ with respect to time yields
\begin{align*}
\frac{d}{dt} \E(u(t)) 
&= \langle \dt u(t), \E'(u(t))\rangle_{\WW^* \times \WW}   
 = \langle \dt u(t), \Q(u(t)) u(t) \rangle_{\WW^* \times \WW} \\
&= \langle \Q^*(u(t)) \dt u(t), u(t) \rangle_{\VV^* \times \VV} 
 = \langle \A(u(t)), u(t)\rangle_{\VV^* \times \VV} = -\D(u(t)).
\end{align*}
A quick inspection of the individual steps reveals that all terms are well-defined under the regularity assumptions on the solution and the energy functional made above.
\end{proof}

\subsection{Remarks}
By integration in time, one can also obtain an integral form 
\begin{align} \label{eq:26}
\E(u(t)) = \E(u(s)) - \int_s^t \D(u(r)) dr, \qquad 0 < s \le t, 
\end{align}
of the dissipation identity, which again holds for all smooth solutions of problem \eqref{eq:23}. 
For generalized solutions that are obtained as limits of certain approximations, one would rather expect a corresponding dissipation inequality; see e.g. \cite{Feireisl03} for details.   
%

%

\section{Structure preserving discretization} \label{sec:3}

Let us note that any sufficiently smooth solution $u :  [0,T] \to \VV$ of the evolution problem \eqref{eq:23} can be characterized by the variational principle 
\begin{align} \label{eq:31}
\langle \Q(u(t))^* \dt u(t), v\rangle_{\VV^* \times \VV} = \langle \A(u(t)), v\rangle_{\VV^* \times \VV} \qquad v \in \VV, t > 0, 
\end{align}
which is equivalent to \eqref{eq:23}, but which can also be used for defining an appropriate weak solution concept. 
As we will illustrate now, this variational characterization or corresponding weak formulations are appropriate for the numerical approximation by Galerkin projection in space and a discontinuous Galerkin approximation in time. 
For both discretization approaches, a discrete dissipation identity or inequality can be derived under general assumptions and with relatively simple arguments. 

\subsection{Galerkin approximation in space}
Let $\VV_h \subset \VV$ denote some closed subspace of the state space $\VV$. For the semi-discretization of the evolution problem \eqref{eq:23} in space, we consider the following discrete variational principle
\begin{align} \label{eq:32}
\langle \Q(u_h(t))^* \dt u_h(t), v_h\rangle_{\VV^* \times \VV} 
= \langle \A(u_h(t)), v_h\rangle_{\VV^* \times \VV}, \qquad \forall v_h \in \VV_h, \ t > 0. 
\end{align}
Appropriate initial conditions are, of course, required to determine the numerical solution uniquely. 
Due to the specific structure of the underlying evolution problem, the dissipation identity is inherited automatically by the Galerkin approximation.
\begin{theorem} \label{thm:2}
Let $u_h : [0,T] \to \VV_h$ denote a smooth solution of \eqref{eq:32}.
Then
\begin{align} \label{eq:33} 
\frac{d}{dt} \E(u_h(t)) = -\D(u_h(t)) \qquad \text{for all } t > 0.
\end{align}
\end{theorem}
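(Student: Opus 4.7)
The plan is to mimic the proof of Theorem~\ref{thm:1} line by line, replacing $u$ by $u_h$, and to exploit the single essential feature of a Galerkin discretization: because $u_h(t)\in\VV_h$, the solution itself is an admissible test function in the discrete variational principle \eqref{eq:32}. The continuous proof in Theorem~\ref{thm:1} never actually required anything beyond (i) the chain rule for $\E$, (ii) the structural identity \eqref{eq:21}, (iii) the definition of the adjoint \eqref{eq:22}, and (iv) the ability to test the equation against the solution itself. All four ingredients are available verbatim at the discrete level, so the proof should go through with no change in structure.

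Concretely, I would first differentiate $t\mapsto\E(u_h(t))$. Since $u_h$ is smooth into $\VV_h\subset\VV\subset\WW^*$ and $\E$ is differentiable on $\dom(\E)$ with respect to the $\WW^*$-topology, the chain rule gives
\begin{align*}
\ddt \E(u_h(t)) = \langle \dt u_h(t),\, \E'(u_h(t))\rangle_{\WW^*\times\WW}.
\end{align*}
Next I would insert the structural assumption \eqref{eq:21} to rewrite $\E'(u_h(t)) = \Q(u_h(t))\, u_h(t)$, and then shift $\Q(u_h(t))$ to the left side of the duality pairing using \eqref{eq:22}, arriving at
\begin{align*}
\ddt \E(u_h(t)) = \langle \Q(u_h(t))^* \dt u_h(t),\, u_h(t)\rangle_{\VV^*\times\VV}.
\end{align*}

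At this point comes the one step that distinguishes the semi-discrete case from the continuous one: I would test the Galerkin identity \eqref{eq:32} with $v_h = u_h(t)$. This is legitimate precisely because $\VV_h$ is a subspace of $\VV$ and $u_h(t)\in\VV_h$ by construction. Substituting yields $\langle \Q(u_h(t))^*\dt u_h(t), u_h(t)\rangle_{\VV^*\times\VV} = \langle \A(u_h(t)),\, u_h(t)\rangle_{\VV^*\times\VV}$, and the definition \eqref{eq:24} identifies the right-hand side with $-\D(u_h(t))$, yielding \eqref{eq:33}.

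I do not anticipate any real obstacle; the only thing to verify carefully is that each term is well-defined for the discrete solution, i.e.\ that the regularity hypotheses on $u_h$ (continuity into $\VV_h\subset\VV$ and differentiability into $\WW^*$) together with $u_h(t)\in\dom(\E)\cap\dom(\A)$ are implicitly assumed by the phrase \emph{smooth solution}, exactly as in Theorem~\ref{thm:1}. This is a conceptual point worth remarking on, because it is the inclusion $\VV_h\subset\VV$ --- and \emph{not} any compatibility of $\Q(u_h)$ or $\A$ with the subspace --- that makes the dissipation structure transfer automatically. This explains the emphasis in the introduction that the geometry is inherited \emph{for free} by any conforming Galerkin scheme.
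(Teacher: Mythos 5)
Your proposal is correct and is exactly the argument the paper intends: the paper's proof simply states that the proof of Theorem~\ref{thm:1} applies verbatim, which is precisely the line-by-line repetition you carry out, with the admissibility of the test function $v_h=u_h(t)$ guaranteed by $u_h(t)\in\VV_h\subset\VV$. No further comment is needed.
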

\begin{proof}
The proof of Theorem~\ref{thm:1} applies verbatim. 
\end{proof}

\subsection*{Remark.}
Let us emphasize the generality of this result which formally covers any evolution problem of the required form and any Galerkin approximation thereof.

\subsection{Time discretization}

As a second discretization step, we consider the numerical approximation in time. Let $\Ttau=\{0=t^0 < t^1 < t^2 < \ldots\}$ be an increasing sequence of time points and let $P_k([t^{n-1},t^n];\VV)=\{u : u=a_0 + a_1 t + \ldots a_k t^k, \ a_j \in \VV\}$ be the space of polynomials on $[t^{n-1},t^n]$ with values in $\VV$. We further denote by
\begin{align} \label{eq:34}
P_k(\Ttau;\VV) = \left\{ u : u^n := u|_{[t^{n-1},t^n]}  \in P_k([t^{n-1},t^n];\VV)\right\} 
\end{align}
the space of piecewise polynomial functions of time with values in $\VV$. Note that functions in $P_k(\Ttau;\VV)$ are smooth on every interval $[t^{n-1},t^n]$, but they may in general be discontinuous at the time points $t^n$, $n>0$, between two intervals. In this case, they have two different values at $t^n$, $n>0$, defined as the limits from above and below.

The discontinuous Galerkin discretization of the variational principle \eqref{eq:31} in time characterizes approximations $u \in P_k(\Ttau;\VV)$ by the discrete variational principle
\begin{align} \label{eq:35}
&\int_{t^{n-1}}^{t^n} \langle \Q(u^n(t))^* \dt u^n(t), v\rangle_{\VV^*\times\VV} dt 
= \int_{t^{n-1}}^{t^n} \langle \A(u^n(t)), v\rangle_{\VV^*\times\VV} dt\\
& \qquad   - \langle \Q(u^n(t^{n-1}))^* (u^n(t^{n-1}) - u^{n-1}(t^{n-1})), v\rangle_{\VV^*\times\VV}, \qquad \forall v \in P_k([t^{n-1},t^n];\VV), \ n>0. \notag
\end{align}
Using the convexity of the energy functional $\E(\cdot)$, the dissipation structure of the evolution problem, and the dissipative nature of the discontinuous Galerkin method, 
we are able to establish the following general dissipation inequality.
\begin{theorem} \label{thm:3}
Let $u \in P_k(\Ttau;\VV)$ denote a solution of the scheme \eqref{eq:35}. Then 
\begin{align} \label{eq:36}
\E(u^n(t^n)) \le \E(u^m(t^m)) - \int_{t^m}^{t^n} \D(u(t)) dt, \qquad 0 \le m < n.
\end{align}
{\em This corresponds to a discrete version of the dissipation identity \eqref{eq:26} in integral form. Due to numerical dissipation, an inequality is obtained here instead of an identity.} 
\end{theorem}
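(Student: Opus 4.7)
The plan is to proceed subinterval by subinterval: on each $[t^{n-1},t^n]$ the discrete solution $u^n$ is itself an admissible test function in $P_k([t^{n-1},t^n];\VV)$, so I would substitute $v = u^n$ into \eqref{eq:35} and exploit the same structural calculation that drove the proof of Theorem~\ref{thm:1}. The jump term that distinguishes the discontinuous Galerkin scheme from the purely continuous setting will then be controlled by the convexity of $\E$, and a telescoping argument assembles the per-step bound into the claimed statement \eqref{eq:36}.

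For the left-hand side of \eqref{eq:35} with $v = u^n$, the same chain-rule manipulation as in Theorem~\ref{thm:1} --- first transferring $\Q^*$ across the pairing via \eqref{eq:22}, then using the representation \eqref{eq:21} of $\E'$ --- yields the pointwise identity
\begin{align*}
\langle \Q(u^n(t))^* \dt u^n(t), u^n(t)\rangle_{\VV^*\times\VV} = \ddt \E(u^n(t)),
\end{align*}
so the first integral contributes $\E(u^n(t^n)) - \E(u^n(t^{n-1}))$, where $u^n(t^{n-1})$ denotes the right limit at $t^{n-1}$. The integral against $\A(u^n)$ equals $-\int_{t^{n-1}}^{t^n}\D(u^n(t))\,dt$ by \eqref{eq:24}, and the jump contribution reduces by the same adjoint transfer to $-\langle u^n(t^{n-1}) - u^{n-1}(t^{n-1}),\, \E'(u^n(t^{n-1}))\rangle$.

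The crucial step is to observe that this jump term is bounded below by $\E(u^n(t^{n-1})) - \E(u^{n-1}(t^{n-1}))$, which is precisely the convex subgradient inequality for $\E$ applied at the base point $u^n(t^{n-1})$. Combining the three pieces produces the one-step bound
\begin{align*}
\E(u^n(t^n)) \le \E(u^{n-1}(t^{n-1})) - \int_{t^{n-1}}^{t^n} \D(u^n(t))\,dt,
\end{align*}
and summing this from $n = m+1$ up to the desired index, with telescoping of the boundary energies at interior nodes, yields \eqref{eq:36} since $u$ agrees with $u^n$ on $(t^{n-1},t^n)$.

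The main obstacle, in my estimation, is the sign-bookkeeping rather than any delicate estimate: the upwind character of the DG scheme, which evaluates $\Q$ and hence implicitly $\E'$ at the right limit $u^n(t^{n-1})$, is exactly what arranges the convex subgradient inequality in the right direction to absorb the jump penalty. Had the scheme used the left limit or a symmetric average in the jump, convexity would push the wrong way and one would obtain at best a conservation identity plus an indefinite remainder. A minor point to verify is that $u^n(t^{n-1}) \in \dom(\E)$ at every time node so that the pairings with $\E'(u^n(t^{n-1}))$ are meaningful; this should be subsumed in the standing smoothness hypothesis on the discrete solution.
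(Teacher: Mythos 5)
Your argument is essentially the paper's own proof: test \eqref{eq:35} with $v=u^n$ on each subinterval, convert the left-hand integrand into $\frac{d}{dt}\E(u^n(t))$ via \eqref{eq:22} and \eqref{eq:21}, identify the $\A$-integral with $-\int\D$ via \eqref{eq:24}, absorb the jump term by convexity of $\E$ at the right limit, and telescope; your closing remark about the upwind evaluation being what makes convexity work in the right direction is also accurate. The one flaw is a sign slip in the convexity step: with $a=u^n(t^{n-1})$ and $b=u^{n-1}(t^{n-1})$, the subgradient inequality $\E(b)\ge\E(a)+\langle \E'(a),\,b-a\rangle$ shows that the jump contribution $-\langle a-b,\E'(a)\rangle$ is bounded \emph{above} by $\E(b)-\E(a)$, not \emph{below} by $\E(a)-\E(b)$ as you wrote; the upper bound is what yields the one-step estimate you then state (and is what the paper uses in the form $\E(a)-\E(b)-\langle a-b,\E'(a)\rangle\le 0$), whereas the lower bound as literally written would push the inequality the wrong way.
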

\begin{proof}
By basic manipulations and the fundamental theorem of calculus, we obtain 
\begin{align*}
\E(u^n(t^n)) &- \E(u^{n-1}(t^{n-1})) \\
&= \E(u^n(t^n)) - \E(u^n(t^{n-1})) + \E(u^n(t^{n-1})) - \E(u^{n-1}(t^{n-1})) \\
&= \int_{t^{n-1}}^{t^n} \frac{d}{dt} \E(u^n(t)) dt  + \E(u^n(t^{n-1})) - \E(u^{n-1}(t^{n-1})) = (i) + (ii).
\end{align*}
By means of the structure relation \eqref{eq:21}, the integrand can be written as 
\begin{align*}
\frac{d}{dt} \E(u^n(t)) 
&= \langle \dt u^n(t), \E'(u^n(t))\rangle_{\WW^* \times \WW} \\
&= \langle \dt u^n(t), Q(u^n(t)) u^n(t))\rangle_{\WW^* \times \WW} 
= \langle Q(u^n(t))^* \dt u^n(t), u^n(t))\rangle_{\VV^* \times \VV}. 
\end{align*}
Integration with respect to time and using equation \eqref{eq:35} with $v=u^n$ then yields
\begin{align*}
(i) 
&= \int_{t^{n-1}}^{t^n} \langle \A(u^n(t)), u^n(t) \rangle_{\VV^* \times \VV} dt \\
&\qquad  - \langle Q(u^n(t^{n-1}))^* (u^n(t^{n-1}) - u^{n-1}(t^{n-1})), u^n(t^{n-1})\rangle_{\VV^* \times \VV} = (iii) + (iv).  
\end{align*}
By identity \eqref{eq:24}, the term $\langle \A(u),u\rangle$ in (iii) can simply be replaced by $\D(u)$.
The remaining terms (ii) and (iv) in the above estimates can be treated as follows: 
For ease of notation, let us define $a=u^n(t^{n-1})$ and $b=u^{n-1}(t^{n-1})$. Then 
\begin{align*}
(ii) + (iv) 
&= \E(a) - \E(b) - \langle Q(a)^* (a-b), a\rangle  \\
&= \E(a) - \E(b) - \langle a-b, \E'(a) \rangle \le 0,
\end{align*}
where we used the structure relation \eqref{eq:21} for the second identity and the convexity of the energy functional $\E(\cdot)$ for the last inequality. 
This already proves the assertion of the theorem for $m=n-1$. 
The case $m<n-1$ simply follows by induction.
\end{proof}

\subsection*{Remark.}
For polynomial degree $k=0$, the sequence $(u^n)_{n \ge 0}$ obtained by the discontinuous Galerkin method coincides with the iterates generated by the implicit Euler method. The discrete dissipation inequality announced in the introduction then follows from that of Theorem~\ref{thm:3} by setting $m=n-1$ and rearranging the terms.

\subsection*{Remark.}
Since the underlying dissipation structure is preserved by Galerkin approximation in space, the above time discretization strategy can also be applied to the Galerkin semi-discretization of the underlying evolution problem. This allows to obtain energy dissipative fully discrete approximation schemes.

\section*{Part II: Diffusion problems}

We now demonstrate the general applicability of our approach by a variety of typical test examples. The first set of problems is concerned with diffusive partial differential equations. Due to the physical background, the term \emph{entropy} is often used in the literature instead of \emph{energy} as we do here. Related analytical and numerical results can therefore be found under the name \emph{entropy methods}; see e.g. \cite{Evans04,Juengel16}. 

\section{Heat equation} \label{sec:4}

One of the simplest models for diffusion processes is given by
the linear heat equation 
\begin{alignat*}{2} 
\dt u &= \Delta u, \qquad && x \in \Omega, \ t>0, \\
    0 &= \dn u,    \qquad && x \in \partial\Omega, \ t>0.
\end{alignat*}
Instead of a quadratic energy functional that is usually employed \cite{Evans08,Thomee06}, we here consider as in \cite{Evans04} the negative logarithmic entropy as an energy functional, i.e., 
\begin{align*}
\E(u) = -\int_\Omega \log u \; dx.
\end{align*}
The derivative of this energy functional can be expressed as
\begin{align*}
\langle \E'(u),v\rangle = -\langle u^{-1}, v\rangle = -\langle u^{-2} u, v\rangle,
\end{align*}
where we used $\langle u,v\rangle=\int_\Omega u v dx$ to abbreviate the scalar product of $L^2(\Omega)$. 
The derivative can thus be decomposed in the form $\E'(u)=\Q(u) u$ with operators $\Q(u)$ and their adjoints $\Q(u)^*$ that can formally be identified with the multiplication operators 
\begin{align*}
\Q(u) v =-u^{-2} v \qquad \text{and} \qquad \Q(u)^* v = -u^{-2} v. 
\end{align*}
The abstract framework presented in Section~\ref{sec:2} now suggests that, instead of the linear heat equation, we should rather consider the equivalent nonlinear equation 
\begin{align} \label{eq:8}
-\frac{1}{u^2} \dt u &= -\frac{1}{u^2} \Delta u, \qquad  x \in \Omega, \ t>0,
\end{align}
in order to comply with the dissipation structure induced by the logarithmic energy functional above. 
The corresponding operator $\A(u)$ for this problem is then given by 
\begin{align*}
\langle \A(u),v\rangle
=-\langle u^{-2} \Delta u, v\rangle
= -\langle u \nabla (u^{-1}), u \nabla (u^{-2} v)\rangle.
\end{align*}
The second identity, which follows from integration-by-parts, use of the boundary conditions, and some elementary computations, provides a weak form of the operator $\A(u)$. 
From this weak representation, one can immediately deduce that 
\begin{align*}
-\D(u) := \langle \A(u),u\rangle = -\|u \nabla (u^{-1})\|^2_{L^2(\Omega)} \le 0.
\end{align*}
From the abstract result of Theorem~\ref{thm:1}, we deduce that $\ddt \E(u(t)) \le -\D(u(t)) \le 0$, i.e., the above logarithmic energy of the system is decreasing or, equivalently, the entropy is increasing along the evolution of the dynamical system.

By the results of Section~\ref{sec:3}, 
a structure-preserving numerical approximation can now be realized as follows: We can use a standard Galerkin approximation of the nonlinear variational principle \eqref{eq:22} with continuous and piecewise linear finite elements in space and an implicit Euler method in time. By Theorem~\ref{thm:2} and \ref{thm:3}, 
the resulting discrete approximations automatically inherit the underlying dissipation structure,
i.e., the logarithmic energy will be monotonically decreasing for the numerical solutions. 
\subsection*{Remark.}
The resulting discretization scheme is based on the nonlinear differential equation $u^{-2} \dt u = u^{-2} \Delta u$ and can be interpreted as a nonlinear approximation scheme for the linear heat equation. Note that according to \eqref{eq:11} and \eqref{eq:12}, the form of the approximation scheme is already determined by the underlying energy functional.

\section{Porous medium equation} \label{sec:5}

We next turn to nonlinear diffusion processes. 
Let $\Omega \subset \RR^d$, $d \ge 1$ be some bounded Lipschitz domain and choose $m>1$. We consider the porous medium equation
\begin{alignat*}{3}
\dt \rho &= \Delta \rho^m  \qquad && \text{in } \Omega, \\
0 &= \dn \rho^{m} && \text{on } \partial\Omega.
\end{alignat*}
A natural candidate for an energy suitable for the analysis of this problem is \begin{align*}
\E(\rho) = \int_\Omega \tfrac{1}{m-1}  \rho^m dx.
\end{align*}
We refer to \cite{Vazquez07} for a complete treatment of the problem based on entropy arguments.
The derivative of the above energy functional is given by 
\begin{align*}
\langle \E'(\rho), v \rangle 
= \int_\Omega \tfrac{m}{m-1} \rho^{m-1} v dx 
= \int_\Omega \tfrac{m}{m-1} \rho^{m-2} \rho v dx.
\end{align*}
One can see that the derivative can be decomposed in the form $\E'(\rho)=\Q(\rho)\rho$ with operator $\Q(\rho)$ and its adjoint $\Q(\rho)^*$ formally defined by 
\begin{align*}
\Q(\rho) u = \tfrac{m}{m-1} \rho^{m-2} u
\qquad \text{and} \qquad 
\Q(\rho)^* v = \tfrac{m}{m-1} \rho^{m-2} v.
\end{align*}
With some abuse of notation, we again identified the operators $\Q(u)$ and $\Q(u)^*$ with the corresponding multiplication operators. 
Following the general framework developed in Section~\ref{sec:2}, 
we now rewrite the porous medium equation in the non-conventional form
\begin{align*}
\tfrac{m}{m-1} \rho^{m-2} \dt \rho 
&= \tfrac{m}{m-1} \rho^{m-2} \Delta \rho^m  \\
&= \tfrac{m}{m-1} \rho^{m-2} \div \left(\tfrac{m}{m-1} \rho \nabla \rho^{m-1}\right).
\end{align*}
Multiplying with a test function $v$, integrating over the domain $\Omega$, 
using integration-by-parts, and the boundary conditions 
here leads to the weak formulation 
\begin{align} \label{eq:41}
\langle \Q(\rho)^* \dt \rho, v\rangle
&=\left(\tfrac{m}{m-1} \rho^{m-2} \dt \rho, v \right)_\Omega \\
&=-\left(\rho \tfrac{m}{m-1} \nabla \rho^{m-1}, \tfrac{m}{m-1} \nabla (\rho^{m-2} v) \right)_\Omega 
 =: \langle \A(\rho), v\rangle, \notag
\end{align}
which is assumed to hold for all suitable test functions $v$ and all $t>0$. 
Let us note that the solution $\rho=\rho(t)$ depends on time $t$ while the test function $v$ does not.
It is not difficult to see that the operator $\A(\cdot)$ is dissipative in the sense that
\begin{align*}
-\D(\rho):=\langle \A(\rho), \rho \rangle = -\int_\Omega \rho \left|\tfrac{m}{m-1} \nabla \rho^{m-1}\right|^2 dx \le 0,
\end{align*}
whenever the density $\rho \ge 0$ stays non-negative; this can be guaranteed by comparison principles \cite{Vazquez07}. 
Assuming the non-negativity of the solution, we thus obtain 
\begin{align*}
\frac{d}{dt} \int_\Omega \tfrac{1}{m-1} \rho(t)^m dx 
&= - \int_\Omega \rho(t) \left|\tfrac{m}{m-1} \nabla \rho(t)^{m-1} \right|^2 dx 
 = - \int_\Omega \left| \tfrac{2m}{2m-1} \nabla \rho(t)^{\frac{2m-1}{2}}\right|^2 dx,
\end{align*}
which is exactly the dissipation identity $\ddt \E(\rho) = - \D(\rho) \le 0$ provided by Theorem~\ref{thm:1}. 
As a direct consequence, one can see that the $L^p$-norm of the solution is uniformly bounded 
if the initial values are bounded appropriately. 
By integration in time, one can additionally obtain uniform bounds for the spatial derivatives. This is the starting point for establishing existence of solutions; we refer to \cite{Vazquez07} for details.

For discretization of the problem, we can now simply use a Galerkin approximation of the variational principle \eqref{eq:41} by piecewise linear finite elements combined with an implicit Euler method in time. As a consequence of Theorem~\ref{thm:2} and \ref{thm:3}, the fully discrete solution will automatically satisfy the dissipation inequality $\dtau \E(u_h^n) \le - \D(u_h^n) \le 0$, which is of a similar form as the dissipation identity of the continuous solution and implies uniform a-priori bounds for the discrete approximations. 

\section{Fokker-Planck equation} \label{sec:6}

Another class of problems that have been studied intensively in the context of entropy methods are Fokker-Planck equations.  
We here consider the linear problem
\begin{alignat*}{2}
\dt \rho &= \div (\nabla \rho + \rho \nabla V), \qquad &&\text{in } \Omega, \ t>0, \\
    0 &= \dn \rho + \rho \dn V                      &&\text{on } \partial\Omega, \ t>0,
\end{alignat*}
where $\rho$ is an unknown density to be determined and $V : \Omega \to \RR$ is a prescribed potential. 
Following \cite{CarrilloEtAl01,Juengel16}, we define $u=\rho/\rho_\infty$, with $\rho_\infty(x)= c e^{-V(x)}$ denoting a 
solution of the corresponding stationary problem. Since the equation is in conservative form, the constant $c$ should be chosen such that $\int_\Omega \rho_\infty dx = \int_\Omega \rho(0) dx$. 
Using the new variable $u$, the above problem can be rewritten as 
\begin{alignat*}{2}
\rho_\infty \dt u &= \div (\rho_\infty \nabla u), \qquad && \text{in } \Omega, \ t>0,\\
                0 &= \rho_\infty \dn u, \qquad && \text{on } \partial\Omega, \ t>0. 
\end{alignat*}
Note that $\rho_\infty$ can be assumed to be positive, independent of time, and known a-priori.
As an energy governing the evolution, we here utilize the quadratic functional
\begin{align*}
\E(u) = \int_\Omega \tfrac{1}{2} u^2 \rho_\infty dx;
\end{align*}
see e.g. \cite[Ch.~2]{Juengel16}.
The derivative of this energy is given by 
$\langle \E'(u),v\rangle = \langle \rho_\infty u,v\rangle$
and can be decomposed as $\E'(u) = \Q(u) u$ with 
$\Q(u)$ and adjoint $\Q(u)^*$ defined by
\begin{align*}
\Q(u) v = \rho_\infty v \qquad \text{and} \qquad \Q(u)^* v = \rho_\infty v. 
\end{align*}
We again identified the operators $\Q(u)$ and $\Q(u)^*$ with the corresponding multiplication operators. 
With these definitions, one can see that the above equation for $u$ is already 
in the appropriate form $\Q(u)^* \dt u = \A(u)$ required for our framework. 
The corresponding weak formulation of the problem here reads 
\begin{align} \label{eq:ex2var}
\langle \Q(u)^* \dt u,v\rangle
&= \langle \rho_\infty \dt u, v\rangle   
 = -\langle \rho_\infty \nabla u, \nabla v\rangle =: \langle \A(u),v\rangle. 
\end{align}
By testing this variational principle with $v=u$, we can extract the dissipation functional 
\begin{align*}
-\D(u) 
:= \langle \A(u), u \rangle 
&= -\int_\Omega \rho_\infty |\nabla u|^2 dx. 
\end{align*}
The above derivations and Theorem~\ref{thm:1} show that the energy $\E(u)$ will be monotonically decreasing, unless $u \equiv c_1$ constant. Based on the conservation of $\rho$ resulting from the first formulation of the problem, one can see that $c_1=1$ must hold in that case. 
One can even show that convergence to the steady state takes place exponentially fast \cite{CarrilloEtAl01,Juengel16}. 

For the discretization of the variational principle \eqref{eq:ex2var}, we can again use a standard finite element approximation in space and a discontinuous Galerkin method in time. 
This will lead to a numerical approximation with the same dissipation behavior as the continuous problem and which can be expected to converge exponentially fast to the unique discrete steady state $u_h \equiv 1$.

\section{Cross diffusion systems} \label{sec:7}

Another class of problems that initiated substantial research efforts in the area of entropy methods 
are cross diffusion systems
\begin{alignat*}{2}
\dt w &= \div (A(w) \nabla w), \qquad  && \text{in } \Omega, \ t>0,\\
    0 &= A(w) \dn w, \qquad && \text{on } \partial\Omega, \ t>0.    
\end{alignat*}
Here $w : \Omega \to \RR^n$ is vector valued and $\div (A(w) \nabla w)_i = \sum_j \sum_k  \partial_j (A(w)_{ik} \partial_j w_k)$ for some matrix valued function $A(w)$; 
the term $A(w) \dn w$ denotes the corresponding normal derivatives.
The evolution is equipped with an entropy functional 
$E(w)=\int_\Omega e(w) dx$ with entropy density $e(\cdot)$ that is assumed to be smooth and strictly convex. 

Following the arguments of \cite{BurgerDiFrancescoPietschmannSchlake10,Juengel15}, 
we first transform the system into \emph{entropy variables} 
\begin{align*}
u = u(w) := e'(w).
\end{align*}
Note that $e'(\cdot)$ can be assumed invertible, since $e(\cdot)$ is strictly convex. 
We can thus recover the physical fields from the entropy variables via
\begin{align*}
w = w(u) = (e')^{-1}(u).
\end{align*}
By substituting $w=w(u)$ into the cross-diffusion system stated above, 
we obtain the following equivalent system in entropy variables
\begin{alignat*}{2}
[e''(w(u))]^{-1} \dt u &= \div (B(u) \nabla u), \qquad && \text{in } \Omega, \ t>0, \\
                     0 &= B(u) \dn u, \qquad && \text{on } \partial\Omega, \ t>0, 
\end{alignat*}
with diffusion matrix $B(u) = A(w(u)) \cdot [e''(w(u))]^{-1}$. 
The basic assumption for the analysis of the cross diffusion system now is, that the entropy density $e(w)$ 
can be chosen such that $B(u) = A(w(u)) [e''(w(u))]^{-1}$ is symmetric and positive semi-definite. 

The natural choice of an energy for the system in entropy variables is 
\begin{align*}
\E(u) = E(w(u)).
\end{align*}
By elementary calculations, one can verify that
\begin{align*}
\langle \E'(u), v\rangle
&=\int_\Omega e'(w(u)) w'(u) v dx 
 = \int_\Omega u [e''(w(u))]^{-1} v dx 
 = \langle  [e''(w(u))]^{-1} w, v\rangle,
\end{align*}
where we used that the Hessian matrix $e''(w)$ is symmetric in the last step. 
We can thus decompose 
$\E'(u) = \Q(u) u$ with $\Q(u)$ and adjoint $\Q(u)^*$ formally defined by
\begin{align*}
\Q(u) v = [e''(w(u))]^{-1} v \qquad \text{and} \qquad \Q(u)^* v = [e''(w(u))]^{-1} v. 
\end{align*}
We again identified $\Q(u)$ and $\Q(u)^*$ with the multiplication operators defining them.
With $\A(u):=\div (B(u) \nabla u)$, 
the cross diffusion system in entropy variables 
can then be written in the abstract form $\Q(u)^* \dt u = \A(u)$ required for our framework. 
Under the above assumption that $B(u)$ is symmetric and positive semi-definite, we obtain 
\begin{align*}
\langle \A(u),u \rangle  = - \langle B(u) \nabla u, \nabla u\rangle := - \D(u) \le 0.
\end{align*}

As a particular example, let us consider the $2 \times 2$ system studied in \cite{Juengel15}, where
\begin{align*}
A(w) = \frac{1}{2+4w_1+w_2} \begin{pmatrix} 1+2 w_1 & w_1 \\ 2w_2 & 2+w_2 \end{pmatrix}. 
\end{align*}
This system models diffusion in a three component system with mass fractions $w_1$, $w_2$, and $w_3=1-w_1-w_2$.   
An appropriate entropy for the evolution is given by 
\begin{align*}
E(w) = \int_\Omega e(w) dx \qquad \text{with} \qquad e(w)=\sum_{i=1}^3 w_i (\log w_i -1). 
\end{align*}
By elementary computations, one can verify that 
\begin{align*}
\frac{d}{dt} E(t) =  -\int_\Omega 2 |\nabla \sqrt{w_1}|^2 + 4 |\nabla \sqrt{w_2}|^2 dx =: -D(w),
\end{align*}
which is crucial for establishing the global existence of solutions. 
The transformations between physical and entropy variables 
here read 
\begin{align*}
u_i &= \log\left(\frac{w_i}{1-w_1-w_2}\right)
\qquad \text{and} \qquad 
w_i = \frac{e^{u_i}}{1+e^{u_1}+e^{u_2}}.
\end{align*}
The back transformation to physical variables automatically yields $0 < w_i < 1$. 
The two matrices relevant for the system in entropy variables are further given by 
\begin{align*}
e''(w) = 
\begin{pmatrix} 
\frac{1}{w_1} + \frac{1}{1-w_1-w_2} &               \frac{1}{1-w_1-w_2} \\
                \frac{1}{1-w_1-w_2} & \frac{1}{w_2} + \frac{1}{1-w_1-w_2}  
\end{pmatrix}
\end{align*}
and
\begin{align*}
B(u(w)) = \frac{1}{(2+4w_1+w_2)} 
\begin{pmatrix}
w_1 (1+w_1-2w_1^2-w_1w_2) & -w_1w_2(2 w_1+w_2) \\ -w_1w_2 (2w_1+w_2) & w_2 (2-w_2-2w_1w_2-w_2^2)
\end{pmatrix}.
\end{align*}
The corresponding formulas for $e''(w(u))$ and $B(u)$ follow simply by inserting the expression for $w=w(u)$.
Both matrices are obviously symmetric and can be shown to be positive definite for arguments $0 < w_i(u) < 1$; see above.

For the numerical approximation of the cross-diffusion system in entropy variables, we can now simply apply a standard finite element approximation in space and a discontinuous Galerkin method in time.
By the results of Section~\ref{sec:2} and \ref{sec:3}, the corresponding numerical method inherits the underlying  dissipation structure automatically.
Another strategy for a structure preserving discretization based on mixed finite elements was proposed in \cite{BurgerCarilloWolfram10}.

\section*{Part III: Problems with energy conservation and dissipation}

While the previous examples were all concerned with diffusive partial differential equations, for which $\E(u)$ often has an interpretation as a physical entropy, we now turn 
to some typical applications that describe conservation or dissipation of energy. 

\section{Nonlinear electromagnetics} \label{sec:8}

The propagation of high-intensity electromagnetic fields through a non-dispersive absorbing medium is described by Maxwell's equations 
\begin{alignat*}{3}
\dt D = \curl H -  \sigma(E) E, \qquad \dt B = -\curl E,  \qquad \text{in } \Omega, \ t>0.
\end{alignat*}
Here $\sigma(E)$ denotes the conductivity of the medium, which may in general be field dependent. We assume that the electric and magnetic field intensities are coupled to the corresponding flux densities by constitutive equations of the form
\begin{alignat*}{2}
 D = d(E), \qquad B = b(H),
\end{alignat*}
which may again be nonlinear in the case of high field intensities. 
We further assume that $d,b:\RR^3 \to \RR^3$ are smooth functions with derivatives 
$d'(E),b'(H) \in \RR^{3 \times 3}$ being symmetric and positive definite, viz., the \emph{incremental permittivity} and \emph{permeability}.

A typical example for the constitutive equations is given by
\begin{align*}
d(E) = \eps_0 (\chi^{(1)} + \chi^{(3)} |E|^2) E, \qquad b(H) = \mu_0 H,  
\end{align*}
where $\eps_0,\mu_0$ denote the permittivity and permeability of vacuum, and the positive constants $\chi^{(1)},\chi^{(3)}$ describe the nonlinear dielectric response of a Kerr medium.

For ease of presentation, we assume in the sequel that $\Omega \subset \RR^3$ is bounded and that 
\begin{align*}
E \times n = 0, \qquad \text{on } \partial \Omega, \ t>0,
\end{align*}
i.e., the computational domain is enclosed in a perfectly conducting box. Other suitable boundary conditions could be treated with obvious modifications. 

In order to characterize the electromagnetic energy of the system, 
we introduce two scalar potentials, i.e., the electric and magnetic energy densities
\begin{align*}
\widehat d(E) = \int_0^E E \cdot d'(E) \cdot dE, \qquad \widehat b(H) = \int_0^H H \cdot b'(H) \cdot dH,
\end{align*}
which are to be understood as path integrals. 
The total energy content of an electromagnetic field distribution $(E,H)$ inside the domain $\Omega$ is then given by
\begin{align*}
\E(E,H) = \int_\Omega \widehat d(E)  + \widehat b(H) \; dx. 
\end{align*}
The derivative of the energy functional $\E(\cdot)$ can now be computed as 
\begin{align*}
\langle \E'(E,H), (\widetilde E, \widetilde H) \rangle 
= \int_\Omega E \cdot d'(E) \cdot \widetilde E + H \cdot b'(H) \cdot \widetilde H \; dx.
\end{align*}
Writing $u=(E,H)$ shows that the derivative can be decomposed as $\E'(u)=\Q(u) u$,
and the operators $\Q(u)$ and $\Q(u)^*$ can be identified with multiplication by the matrices
\begin{align*} 
\Q(E,H) = \begin{pmatrix} d'(E) & 0 \\ 0 & b'(H) \end{pmatrix} = \Q(E,H)^*.
\end{align*}
Using the constitutive relations, 
we can expand the time derivatives in Maxwell's equations as 
$\dt D=d'(E) \dt E$ and $\dt B=b'(H) \dt H$,
which leads to the equivalent system 
\begin{align*}
d'(E) \dt E = \curl H - \sigma E, \qquad b'(H) \dt H = -\curl E, \qquad  \text{in } \Omega, \ t>0.
\end{align*}
These equations already have the appropriate abstract form 
$\Q(u)^* \dt u = \A(u)$ of our framework with operator $\A(u)$ defined in a variational or corresponding weak form by 
\begin{align*}
\langle \A(E,H), (v,w) \rangle  
&= \langle \curl H,v\rangle  - \langle \sigma(E) E,v\rangle - \langle \curl E, w\rangle \\
&= \int_\Omega H \cdot \curl v - \sigma(E) E \cdot v - \curl E \cdot w \; dx.
\end{align*}
For the second identity, we used integration-by-parts and homogeneous boundary conditions $v \times n=0$ for the first test function.
Inserting $v=E$ and $w=H$ into the definition of $\A(\cdot)$ allows us to extract the dissipation functional
\begin{align*}
-\D(E,H) := \langle \A(E,H),(E,H) \rangle = -\int_\Omega \sigma(E) |E|^2 dx \le 0.
\end{align*}
From the abstract dissipation identity $\ddt \E(u) \le - \D(u)$ provided by Theorem~\ref{thm:1}, we can thus conclude that the energy of the electromagnetic system is conserved over time up to the part that is dissipated by conduction losses. 

A quick inspection of the above weak form of the operator $\A(\cdot)$ shows that the natural function spaces for the representation of the fields $E(t)$ and $H(t)$ here are given by $H_0(\curl,\Omega)$ and $L^2(\Omega)$. 
A Galerkin approximation of the weak formulation of the evolution problem is then possible by appropriate mixed finite elements \cite{BoffiBrezziFortin,Monk}. Together with a discontinuous Galerkin discretization in time, we obtain numerical approximation schemes that automatically inherit the underlying energy dissipation structure; this follows directly from the abstract results of Section~\ref{sec:3}.

\subsection*{Remark.}
Some very popular discretization schemes, viz., the finite-difference-time-domain method \cite{Yee} and the finite-integration-technique \cite{Weiland}, are based on a formulation in different variables, e.g.,  $E$ and $B$, and also on other time discretization strategies.  
It seems open or at least unclear, to which extent these methods are able to represent the underlying energy structure correctly on the discrete level.

\section{Gas dynamics} \label{sec:9}

The following example taken from \cite{Egger18} was actually our main motivation for developing the abstract framework 
presented in this paper. 
The isentropic flow of gas through a pipe of length $L$ is governed by balance laws of the form
\begin{alignat*}{2}
          \dt \rho + \dx q &= 0,            \qquad && 0<x<L, \ t>0, \\
\dt q + \dx (q^2/\rho + p) &= - q |q|/\rho, \qquad && 0<x<L, \ t>0,
\end{alignat*}
which describe the conservation of mass and the balance of momentum, respectively. The right hand side of the second equation models the friction at the pipe walls
and, for ease of notation, all irrelevant parameters were scaled here to one; 
we refer to   \cite{BrouwerGasserHerty11} for more information on the model and further references.
In order to close the system, we require that the pressure and density are related by an equation of state, e.g.,
\begin{align*}
p=p(\rho)=\rho^\gamma, \qquad \gamma>1.
\end{align*}
We further assume that the pipe is closed at the ends, which can be expressed as 
\begin{align*}
q(0)=q(L)=0, \qquad t>0. 
\end{align*}
The total free energy of the gas transport problem, consisting of a kinetic and an internal energy contribution, 
is then given by 
\begin{align*}
\E(\rho,q) = \int_0^L \frac{q^2}{2\rho} + P(\rho) \; dx, 
\end{align*}
where $P(\rho)=\rho \int_1^\rho p(r)/r^2 dr$ denotes the density of the internal energy.
Using the two balance laws above and the boundary conditions, one can show that 
\begin{align*}
\ddt \E(\rho(t)),q(t)) = -\int_0^L \frac{|q|^3}{\rho^2} dx \le 0,
\end{align*}
i.e., energy is conserved up to a part that is dissipated by friction at the pipe walls; a proof is presented below.
The derivative of the energy functional is here given by 
\begin{align*}
\langle \E'(\rho,q), (\tilde \rho,\tilde q)\rangle  
&= \int_0^L -\frac{q^2}{2\rho^2} \tilde \rho + P'(\rho) \tilde \rho + \frac{q}{\rho} \tilde q \; dx .
\end{align*}
A simple calculation allows to decompose the derivative as $\E'(\rho,q) = \Q(\rho,q) (\rho,q)$ with an operator $\Q(\rho,q)$ that can be identified with multiplication by the matrix 
\begin{align*}
\Q(\rho,q) = \begin{pmatrix} \frac{P'(\rho)}{{\rho}} & -\frac{q}{2\rho^2} \\ 0 & \frac{1}{\rho}\end{pmatrix}. 
\end{align*}
The adjoint operator $\Q(\rho,q)^*$ can then be identified with multiplication by the transposed matrix. 
By means of these operators, we can rewrite the above balance equations in the abstract form $\Q(u)^* \dt (u) = \A(u)$ required for our framework with $u=(\rho,q)$. The corresponding differential equations now read 
\begin{align*}
\frac{P'(\rho)}{\rho} \dt \rho &=  -\frac{P'(\rho)}{\rho} \dx q,  \\
\frac{1}{\rho} \dt q - \frac{q^2}{2\rho^2} \dt \rho &= -\dx\left( \frac{q^2}{2\rho^2} + P'(\rho)\right) - \frac{q}{2\rho^2} \dx q  - \frac{|q| q}{\rho^2},
\end{align*}
and they are again supposed to hold for all $0<x<L$ and $t>0$.

A weak formulation of this system can be obtained by 
testing the two equations with test functions $\eta$ and $w$, using integration-by-parts for the first term on the right hand side of the second equation, and imposing homogeneous boundary conditions for the test function $w$. The resulting variational principle reads 
\begin{alignat*}{2}
\left\langle \frac{P'(\rho)}{\rho} \dt \rho\,\eta \right\rangle 
&= - \left\langle \frac{P'(\rho)}{\rho} \dx q, \eta\right\rangle,  \\
\left\langle \frac{1}{\rho} \dt q - \frac{q^2}{2\rho^2} \dt \rho, w  \right\rangle &= 
\left\langle  \frac{q^2}{2\rho^2} + P'(\rho), \dx w \right\rangle 
- \left\langle \frac{q}{2\rho^2} \dx q + \frac{|q| q}{\rho^2},w \right\rangle,
\end{alignat*}
for all $\eta \in L^2(0,L)$, $w \in H^1_0(0,L)$, and all $t>0$. 
Note that the two solution components $\rho=\rho(t)$ and $q=q(t)$ depend on time, while the test functions $\eta$ and $w$ are independent of time. 
Simply testing this variational principle with $\eta=\rho(t)$ and $w=q(t)$ results in 
\begin{align*}
\ddt \E(\rho,q) 
&= \left\langle \frac{P'(\rho)}{\rho} \dt \rho, \rho\right\rangle + \left\langle \frac{1}{\rho} \dt q - \frac{q^2}{2\rho^2} \dt \rho, q \right\rangle \\
&= -\left\langle \frac{|q| q}{\rho^2},q\right\rangle
 = -\int_0^L |q|^2/\rho^2 dx =: -\D(\rho,q).
\end{align*}
This is exactly the energy dissipation identity announced above; see also Theorem~\ref{thm:1}. 

The advantage of this, somewhat non-conventional, formulation of the gas transport problem is, that a systematic discretization of the corresponding weak formulation is now possible by Galerkin approximation. 
As illustrated in \cite{Egger18}, a space discretization by piecewise linear finite elements for $\rho$ and continuous piecewise linear finite elements for $q$ leads to a semi-discretization that inherits the underlying dissipation structure. 
A subsequent time discretization by the implicit Euler method 
yields a fully discrete approximation that obeys a corresponding dissipation inequality. 
As shown in \cite{Egger18}, the discretization scheme can be extended naturally from a single pipe to pipeline networks and, although no particular upwind technique was employed, the scheme also seems to capture the correct behavior in the presence of shocks.

\section{Hamiltonian systems} \label{sec:10}

As a last example, we consider Hamiltonian or gradient systems of the form 
\begin{align*}
\dot x &= (J(x) - R(x)) \nabla_x H(x).
\end{align*}
Here $H : X \to \RR$ is a given energy functional, $X$ denotes an appropriate state space, and $\nabla_x H(x)$ denotes the Riesz-representation of the derivative functional $H'(x)$. 
The operators $J(x),R(x) : X \to X'$ are assumed to be anti-symmetric and positive semi-definite, respectively; see e.g. \cite{ChaturantabutBeattieGugercin16,SchJ14} for details. 
By these assumptions, we have
\begin{align*}
\langle J(x) y, y\rangle =0 \qquad \text{and} \qquad \langle R(x) y,y\rangle \ge 0 \qquad \text{for all } x,y \in X.
\end{align*}
The energy balance of the dynamical system can then be derived as follows:
\begin{align*} 
 \frac{d}{dt} H(x(t)) 
&= \langle \nabla_x H(x(t)), \dot x(t)\rangle \\
&= \langle \nabla_x H(x(t)), J(x) \nabla_x H(x(t))\rangle - \langle \nabla_x H(x(t)), R(x(t)) \nabla_x H(x(t))\rangle \\
&= - \langle \nabla_x H(x(t)), R(x(t)) \nabla_x H(x(t))\rangle =: -D(x(t)) \le 0.
\end{align*}
Under the above assumptions on the operators $J(x)$ and $R(x)$, the energy $H(x(t))$ of the system is thus monotonically decreasing along smooth solution trajectories.

In a similar manner as in Section~\ref{sec:7}, we now introduce the transformation to entropy variables and the corresponding back transformation according to
\begin{align*}
u = u(x) :=\nabla_x H(x) 
\qquad \text{and} \qquad 
x=x(u)=(\nabla_x H)^{-1}(u).
\end{align*}
We tacitly assumed here that the function $\nabla_x H(\cdot)$ is invertible. 
Similarly as in Section~\ref{sec:7}, 
we can then equivalently rewrite the evolution equation in entropy variables as
\begin{align*}
[\nabla_{xx} H(x(u))]^{-1} \dt u &= [J(x(u)) - R(x(u))] \; u.
\end{align*}
The energy of the system in entropy variables is simply given by $\E(u)=H(x(u))$.\\
By some elementary calculations, one can verify that 
\begin{align*}
\nabla_w \E(u) 
= \nabla_u x(u) \nabla_x H(x(u)) 
= [\nabla_{xx} H(x(u))]^{-1} u.  
\end{align*}
This shows that the above dynamical system written in entropy variables has exactly the form $\Q(u)^*\dt u = \A(u)$ required for our framework with operators 
\begin{align*}
\Q(u)^* v =  [\nabla_{xx} H(x(u))]^{-1} v 
\qquad \text{and} \qquad 
\A(u) = [J(x(u)) - R(x(u))] \, u.
\end{align*}
By the above assumptions on $J(x)$ and $R(x)$, the dissipation functional satisfies
\begin{align*}
-\D(u):=\langle \A(u), u\rangle = -\langle R(x(u)) u, u \rangle \le 0.
\end{align*}
From Theorem~\ref{thm:1}, we thus obtain the dissipation identity $\ddt \E(u) = - \D(u)$, which is of course equivalent to the identity $\ddt H(x) = - D(x)$ stated above. 

From our considerations in Section~\ref{sec:3}, we can further deduce that a simple Galerkin approximation in space of the system in entropy variable and a time discretization by a discontinuous Galerkin method will automatically lead to numerical approximations that inherit the dissipative nature of the underlying Hamiltonian or gradient system. 

\subsection*{Remark.}
Our framework also provides a systematic strategy for the structure preserving model reduction of Hamiltonian or more general gradient systems; let us refer to \cite{BenMS05,ChaturantabutBeattieGugercin16,Schilders08} for an introduction to the field. 
Following our abstract framework, the Hamiltonian or gradient structure can automatically be preserved in the model reduction process, if the system is first rewritten in entropy variables and then a Galerkin projection is used for the construction of the reduced model. A discontinuous Galerkin approximation in time 
allows to obtain even fully discrete approximate models which automatically preserve the underlying Hamiltonian or gradient structure.

\section*{Discussion}

In this paper, we proposed an general abstract framework for the formulation and systematic discretization of evolution problems that are governed by energy dissipation or entropy production.
The basic step in our approach was to rewrite the problem 
in a particular form that complies with the underlying dissipation structure. 
A structure-preserving numerical approximation could then be achieved by Galerkin approximation in space and discontinuous Galerkin methods in time. 
As we demonstrated, the proposed framework is applicable to a wide range of applications, including diffusive partial differential equations and Hamiltonian or more general gradient systems. 

While the general strategy for the design of structure-preserving discretization schemes seems formally applicable to almost any dissipative problem, 
the numerical analysis of the resulting schemes, apart from their dissipation behavior, remains problem dependent and still has to be done case by case. 
We strongly believe, however, that a systematic numerical analysis might be possible for certain classes of applications under rather general assumptions on the main ingredients, e.g. the energy and dissipation functionals and the function spaces used for the formulation.
We hope that this article will initiate further research in this direction.

\section*{Acknowledgments}
The author would like to thank the German Research Foundation (DFG) for financial support through the grants Eg-331/1-1, IRTG~1529, TRR~146 and TRR~154, and through the ``Excellence Initiative'' of the German Federal and State Governments via the Graduate School of Computational Engineering GSC~233 at TU~Darmstadt.


\end{document}